\documentclass[a4paper,10pt,reqno]{amsart}

\usepackage{amsmath}
\usepackage{amsthm}
\usepackage{amssymb}
\usepackage{graphicx}
\usepackage{multicol}
\usepackage{appendix}
\usepackage[pdftex]{hyperref}
\hypersetup{breaklinks=true,colorlinks=true,citecolor=green,urlcolor=red,linkcolor=blue,pdfpagemode=UseNone}

\oddsidemargin   -0.5in 
\evensidemargin  -0.5in
\textwidth            7.27in 
\parindent             0pt
\parskip             10pt

\theoremstyle{plain}

\newtheorem{theorem}{Theorem}
\newtheorem{lemma}[theorem]{Lemma}
\newtheorem{corollary}[theorem]{Corollary}




\def\COMMENT#1{}
\let\COMMENT=\footnote

\setlength{\columnsep}{0.7cm}

\begin{document}

\title{A fast algorithm for the gas station problem}
\author{Kleitos Papadopoulos}
\thanks{Kleitos Papadopoulos, InSPIRE, Agamemnonos 20, Nicosia, 1041, Cyprus, \href{mailto:kleitospa@gmail.com}{\tt kleitospa@gmail.com}}
\author{Demetres Christofides}
\thanks{Demetres Christofides, School of Sciences, UCLan, 7080 Pyla, Larnaka, Cyprus, \href{mailto:dchristofides@uclan.ac.uk}{\tt dchristofides@uclan.ac.uk}}
\date{\today}
\subjclass[2010]{90B06,90C35,68Q25}
\keywords{gas station problem, transportation, algorithm complexity}
\begin{abstract}
In the gas station problem we want to find the cheapest path between two vertices of an $n$-vertex graph. Our car has a specific fuel capacity and at each vertex we can fill our car with gas, with the fuel cost depending on the vertex. Furthermore, we are allowed at most $\Delta$ stops for refuelling. 

In this short paper we provide an algorithm solving the problem in $O(\Delta n^2 + n^2\log{n})$ steps improving an earlier result by Khuller, Malekian and Mestre.


\end{abstract}

\maketitle

\begin{multicols}{2}
\section{Introduction}

There are numerous problems in the literature in which the task is to optimise the travel from one location to another or to optimise a tour visiting a specific set of locations. The problems usually differ in the restrictions that we may put into the way we can travel as well as in the notion of what an `optimal' route means. Of course one could theoretically check all possible ways to travel and pick out the optimal one. However we care about finding the optimal route in a much quicker way as usually checking all possibilities is impractical.

One of the most widely known abstractions of travel optimisation problems is that of the shortest paths which although very general in their definition, fail to take into consideration most of the aspects that arise in real world. Perhaps one of their more practical generalisations is the `gas station problem', introduced by Khuller, Malekian and Mestre in~\cite{KMM}. Out of the infinitude of possible of possible parameters it includes one more central aspect of the travelling agent, that of its limited fuel capacity and fuel consumption during the travelling. As it is the case for the shortest paths problem its also with the gas station problem that it can be used in a variety of problems that are not directly related with travelling optimisation

The setting of the gas station problem is as follows:

We are given a complete graph $G=(V,E)$, two specific vertices $s,t$ of $G$ and functions $d:E \to \mathbb{R}^+$ and $c:V \to \mathbb{R}^+$. Finally we are also given positive numbers $U$ and $\Delta$.

Each vertex $v$ of $G$ corresponds to a gas station and the number $c(v)$ corresponds to the cost of the fuel at this station. Given an edge $e=uv$ of $G$, the number $d(e)=d(u,v)$ corresponds to the distance between the vertices $u$ and $v$, or what is essentially equivalent, to the amount of gas needed to travel between $u$ and $v$. Finally, the number $U$ corresponds to the maximum gas capacity of our car.

Our task is to find the cheapest way possible to move from vertex $s$ to vertex $t$ if we are allowed to make at most $\Delta$ refill stops.

We make two further assumptions:

Our first assumption concerns the function $d:E \to \mathbb{R}^+$. We will follow the natural assumption that it satisfies the triangle inequalities. I.e.
\[ d(u,v) + d(v,w) \geqslant d(u,w) \quad \text{for every $u,v,w \in V$.}\] 

Our second assumption concerns the amount of fuel that we have initially in our car. We will make the assumption that we start with an empty fuel tank. We also consider the filling in our tank at this vertex as one of the refill stops. This does not make much difference. Indeed suppose that initially we have an amount $g$ of gas. Instead of solving the gas station problem for the graph $G$, we modify this graph by adding a new vertex $s'$ . We define the new distances by $d(s',v) = U-g + d(s,v)$ for every $v \in V(G)$. I.e.\ the vertex $s'$ has distance $U-g$ from $s$ and furthermore the shortest path from $s'$ to any other vertex $v$ of $G$ is via $s$. We also define $c(s') = 0$. It is then obvious that if we start from $s'$, we should completely fill our tank and then move to vertex $s$. The only difference is that we are now allowed one fewer refill stop than before. So solving the gas station problem for $G$ starting from $s$ with $g$ units of gas is equivalent to solving the gas station problem for $G'$ starting from $s'$ with no gas. 

An algorithm solving the gas station problem that runs in $O(\Delta n^2 \log{n})$ was introduced by Khuller, Malekian and Mestre in~\cite{KMM}. The main result of our article is the following:

\begin{theorem}\label{MainTheorem}
Given an $n$ vertex graph $G$, there is an algorithm which solves the gas station problem with $\Delta$ stops in at most $O(\Delta n^2 + n^2 \log{n})$ steps.
\end{theorem}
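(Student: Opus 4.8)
The plan is to set up a dynamic program over the pair (number of refuel stops used, current vertex), to argue via an exchange argument that the continuum of possible fuel levels collapses to $O(n)$ relevant values per vertex, and finally to reorganise the transitions so that, after a single $O(n^2\log n)$ sorting phase, each layer of the dynamic program costs only $O(n^2)$.

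First I would record two reductions that make the route combinatorially simple. Since $G$ is complete and $d$ satisfies the triangle inequality, between two consecutive refuelling stops it is never worse to drive along the direct edge; hence an optimal journey is a sequence of refuel stops $s=v_0,v_1,\dots,v_k=t$ with $k\le\Delta$ and $d(v_i,v_{i+1})\le U$, joined by direct edges. Moreover one never stops merely to buy nothing: a no-purchase stop can be deleted, saving a stop and (by the triangle inequality) fuel, so at every intermediate stop a strictly positive amount is bought.

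Next comes the structural heart of the argument. Fix two consecutive stops $u\to w$ and compare the prices $c(u)$ and $c(w)$. A standard exchange argument shows that if $c(u)\le c(w)$ we may assume we leave $u$ with a full tank, whereas if $c(u)>c(w)$ we may assume we buy at $u$ only what is needed to arrive at $w$ empty. Combined with the ``no empty stop'' remark, this forces the fuel level on arrival at any vertex $w$ to lie in the finite set $\{0\}\cup\{U-d(u,w):u\in V,\ d(u,w)\le U\}$, which I index simply by the predecessor $u$. I would therefore define $A[q][w][u]$ (respectively $A[q][w][\varnothing]$) as the minimum cost of reaching $w$ as the $q$-th stop having arrived with a full tank from $u$, i.e.\ with fuel $U-d(u,w)$ (respectively with an empty tank). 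This table has $\Delta\cdot n\cdot(n+1)=O(\Delta n^2)$ entries, with $A[1][s][\varnothing]=0$ and the answer equal to the minimum entry over $A[q][t][\cdot]$ with $q\le\Delta$.

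The obstacle --- and the place where the improvement over~\cite{KMM} is won --- is performing each layer's transition in $O(n^2)$ rather than $O(n^2\log n)$. The full-tank transitions are cheap: writing $\mathrm{Full}[q][u]=\min_{g}\{A[q][u][g]+(U-g)c(u)\}$, one has $A[q{+}1][w][u]=\mathrm{Full}[q][u]$ for every $w$ with $d(u,w)\le U$, so all of these cost $O(n^2)$ per layer. The empty-arrival transitions are the subtle ones: from $u$ one gets the candidate $d(u,w)c(u)+\min_{g<d(u,w)}\{A[q][u][g]-g\,c(u)\}$, whose inner minimum is a prefix minimum over the arrival levels $g$ below the threshold $d(u,w)$. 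A binary search here would reintroduce a $\log n$ factor. Instead I would, once and for all, sort for each $u$ both the multiset of possible arrival levels $g$ and the targets $w$ by the value $d(u,w)$ --- an $O(n^2\log n)$ preprocessing. Then in each layer the threshold $d(u,w)$ sweeps monotonically through the sorted levels, so a two-pointer scan maintains the required prefix minimum and emits all candidates for a fixed $u$ in $O(n)$ time, giving $O(n^2)$ per layer and $O(\Delta n^2)$ overall. The main point to verify carefully is that the sorted order of the levels is fixed across layers (only the stored costs $A[q][u][g]$ change, not the real numbers $U-d(u,w)$), which is exactly what lets the sort be amortised out of the per-layer cost; summing the two phases yields the claimed $O(\Delta n^2+n^2\log n)$.
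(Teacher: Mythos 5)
Your proposal is correct and follows essentially the same route as the paper: the same triangle-inequality and exchange arguments collapsing arrival fuel levels to $\{0\}\cup\{U-d(u,w)\}$, the same layered dynamic program over (number of stops, vertex, arrival state), and the same key trick of pre-sorting each vertex's incident edges once so that the threshold $g<d(u,w)$ sweeps monotonically and a running prefix minimum handles each source vertex in $O(n)$ per layer. Your two-pointer scan is exactly the paper's nested-sets observation ($S(u,v)\subseteq S(u',v)$ when $d(u,v)\leq d(u',v)$) together with its Lemma on the increments $T(v_i,v)$, just phrased in terms of fuel levels rather than predecessor sets.
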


We should point out that the algorithm in~\cite{KMM} makes similar assumptions to ours. It explicitly mentions the assumption that the car starts with an empty fuel tank. It does not mention explicitly the assumption that the distances in the graph need to satisfy the triangle inequalities. However it does use it implicitly in its Lemma~1. (See the proof of our Lemma~\ref{KMM Lemma} which makes explicit why we do indeed need the distances to satisfy the triangle inequalities.)

We will prove Theorem~\ref{MainTheorem} in the next section. In a couple of instances our algorithm will call some familiar algorithms with known running time. The interested reader can find more details about those algorithms in many algorithms or combinatorial optimisation books, for example in~\cite{CLRS}. 

\section{Proof of Theorem~\ref{MainTheorem}}

We start by ordering all edge distances. Since there are $O(n^2)$ edges, this can be done in $O(n^2 \log{n})$ steps, using e.g.\ heapsort. In fact we will not use the full ordering of the edge distances. What we will need are the following local orderings:

For each $v \in V$ we create an ordering $v_1,\ldots,v_{n-1}$ of the vertices of $V \setminus \{v\}$ such that $d(v,v_i) \leqslant d(v,v_j)$ for $i \leqslant j$. We will call this the {\bf local edge ordering} at $v$. (Note that this definition might be a bit misleading as the local edge ordering at $v$ is an ordering of the vertices of $V \setminus \{v\}$. Of course, this gives an ordering of the edges incident to $v$ and this is where it gets its name from.)

Of course all of these local orderings can also be computed in $O(n^2 \log{n})$ steps.

So it is enough to show how to solve the gas station problem in $O(\Delta n^2)$ time assuming that the edges are already ordered by distance. 

The fact that the edge distances satisfy the triangle inequalities is needed to prove the following simple lemma:

\begin{lemma}\label{KMM Pre-Lemma}
There is an optimal route during which we fill our car with a positive amount of gas at every station (apart from the last one). 
\end{lemma}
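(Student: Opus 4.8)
The plan is to start from an arbitrary optimal route and repeatedly \emph{short-circuit} any station at which we buy no fuel, using the triangle inequality to guarantee that skipping such a station never costs us anything. Write the route as a path $s = p_0, p_1, \ldots, p_k = t$ and record the amount $r_i \geq 0$ of fuel bought at $p_i$; if $g_i$ denotes the fuel on arrival at $p_i$, feasibility means $g_0 = 0$, $g_{i+1} = g_i + r_i - d(p_i,p_{i+1}) \geq 0$, and $g_i + r_i \leq U$, while the cost is $\sum_i c(p_i) r_i$ and the number of stops is the number of indices with $r_i > 0$. Suppose some interior station $p_j$ (with $0 < j < k$) has $r_j = 0$. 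I would delete $p_j$, replacing the two legs $p_{j-1} \to p_j \to p_{j+1}$ by the single leg $p_{j-1} \to p_{j+1}$.

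First I would verify feasibility and that the cost does not rise. Leaving $p_{j-1}$ with the same fuel as before, the triangle inequality $d(p_{j-1},p_{j+1}) \leq d(p_{j-1},p_j) + d(p_j,p_{j+1})$ shows that we now reach $p_{j+1}$ with a surplus $\delta \geq 0$ over the original arrival fuel $g_{j+1}$; here we use $r_j = 0$, so skipping $p_j$ discards no purchased fuel. Note also that $s = p_0$ never needs to be deleted: since we arrive at $s$ with an empty tank and all edge lengths are positive, a feasible route with $k \geq 1$ must have $r_0 > 0$, so $s$ is always a genuine refuelling stop, and $t$ is exempt by hypothesis.

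The main obstacle is that this surplus could violate the capacity bound $g_{j+1} + r_{j+1} \leq U$ downstream. I would resolve it by a \emph{surplus-propagation} step: arriving at $p_{j+1}$ with $\delta$ extra fuel, lower the purchase there to $r_{j+1}' = \max(r_{j+1} - \delta, 0)$. If $r_{j+1} \geq \delta$, the departure fuel at $p_{j+1}$ is unchanged, the remainder of the route is untouched, and the total cost falls by $c(p_{j+1})\delta \geq 0$; if $r_{j+1} < \delta$, we buy nothing at $p_{j+1}$ and carry the reduced surplus $\delta - r_{j+1}$ forward, applying the same reduction at $p_{j+2}$, and so on, until the surplus is absorbed or we reach $t$ with leftover fuel. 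Because arrival fuel never exceeds $U$ (it equals the previous departure fuel, which is at most $U$, minus a nonnegative distance) and every departure fuel is at most its original value, no capacity or nonnegativity constraint is ever broken and no cost is ever added; since reductions can only switch refuelling off, the stop count stays at most $\Delta$.

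Finally I would observe that each such operation deletes one vertex from the path while leaving the cost no larger and the number of stops no larger. As we began with an optimal route, the cost cannot drop below the optimum, so every route produced along the way is again optimal; and since the path length is a nonnegative integer that strictly decreases at each step, the process terminates. The route at which it halts is optimal and has no interior station with zero refuelling, i.e.\ we buy a positive amount of fuel at every station except the last, as required.
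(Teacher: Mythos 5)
Your proposal is correct and follows essentially the same approach as the paper: both use the triangle inequality to delete an interior station where no fuel is bought, you by iterative improvement with termination via decreasing path length, the paper by picking an optimal route through the fewest vertices and deriving a contradiction. Your surplus-propagation step spells out feasibility details the paper leaves implicit (note only that in the case $r_{j+1}<\delta$ the new departure fuel at $p_{j+1}$ can exceed its original value, but it then equals the arrival fuel, which your argument already bounds by $U$, so nothing breaks).
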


\begin{proof}
Amongst all optimal routes, pick one passing through the smallest number of vertices. Suppose that it passes through vertices $v_1,v_2,\ldots,v_k$ in that order. We definitely need to fill our car with gas at $v_1$ as we start with an empty tank. Suppose now for contradiction that in this optimal path we do not fill our car at station $v_i$ for some $1 < i < k$. Then, instead of moving from $v_{i-1}$ to $v_{i+1}$ through $v_i$, we could have moved to it directly. This is indeed possible as 
\[d(v_{i-1},v_{i+1}) \leqslant d(v_{i-1},v_i)+d(v_i,v_{i+1})\]
and it is a contradiction as we assumed that our optimal path is both optimal and minimal.
\end{proof}

Lemma~\ref{KMM Pre-Lemma} is needed to prove the following slightly modified lemma from~\cite{KMM}. Even though it looks completely obvious, we nevertheless provide a detailed proof in order to make explicit the need for using Lemma~\ref{KMM Pre-Lemma} and thus to require that the distances in the graph satisfy the triangle inequalities. Lin~\cite{Lin} also makes explicit this requirement. Essentially the same lemma also appears in~\cite{LGR}.

\begin{lemma}\label{KMM Lemma} There is an optimal route, say passing through vertices $v_1,v_2,\ldots,v_k$ in that order, where $v_1 = s$ and $v_k = t$, for which an optimal way to refill the tank is as follows:
\begin{itemize}
\item[(i)] For $1 \leqslant i \leqslant k-2$, if $c(v_i) < c(v_{i+1})$, then at station $v_i$ we completely fill our tank.
\item[(ii)] For $i=k-1$, if $c(v_i) < c(v_{i+1})$, then at station $v_i$ we fill the tank with just enough gas in order to reach vertex $v_{i+1}$ with an empty tank.
\item[(iii)] For $1 \leqslant i \leqslant k-1$, if $c(v_i) \geqslant c(v_{i+1})$ then at station $v_i$ we fill the tank with just enough gas in order to reach vertex $v_{i+1}$ with an empty tank.
\end{itemize}
\end{lemma}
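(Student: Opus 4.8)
The plan is to fix the optimal route guaranteed by Lemma~\ref{KMM Pre-Lemma}, so that $v_1,\dots,v_k$ pass through the fewest possible vertices and we fill a positive amount at every intermediate station. Along this fixed route I would phrase the choice of how much to refuel as an optimisation: writing $g_i \geq 0$ for the amount of gas bought at $v_i$ and $a_i$ for the amount in the tank on arrival at $v_i$, we have $a_1 = 0$, the recurrence $a_{i+1} = a_i + g_i - d(v_i,v_{i+1})$, the capacity constraints $a_i + g_i \leq U$, and the reachability constraints $a_{i+1}\geq 0$. The total cost is $\sum_{i=1}^{k-1} c(v_i)\,g_i$, and the goal is to show that among all feasible refillings of this route there is an optimal one obeying (i)--(iii).

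Next I would argue by a sequence of cost-non-increasing local exchanges, each shifting a small amount $\eps$ of purchased gas between two consecutive stations while leaving the tank level from $v_{i+2}$ onwards (hence the remainder of the trip) untouched. The three rules correspond to the three natural moves. For the last refuelling stop $i=k-1$, carrying any gas into the destination $t=v_k$ is pure waste, so decreasing $g_{k-1}$ until $a_k=0$ can only lower the cost; this settles (ii) and the case $i=k-1$ of (iii) at once. For $i \leq k-2$ with $c(v_i)\geq c(v_{i+1})$, if $a_{i+1}>0$ I would transfer purchase from $v_i$ to the no-more-expensive station $v_{i+1}$, that is $g_i \mapsto g_i-\eps$ and $g_{i+1}\mapsto g_{i+1}+\eps$, which keeps the gas leaving $v_{i+1}$ fixed and changes the cost by $\eps\bigl(c(v_{i+1})-c(v_i)\bigr)\leq 0$; pushing $\eps$ up to $a_{i+1}$ yields $a_{i+1}=0$, which is (iii). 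Symmetrically, for $i\leq k-2$ with $c(v_i)<c(v_{i+1})$, if the tank is not full at $v_i$ I would pull purchase earlier, $g_i\mapsto g_i+\eps$ and $g_{i+1}\mapsto g_{i+1}-\eps$, again leaving the later trip unchanged and changing the cost by $\eps\bigl(c(v_i)-c(v_{i+1})\bigr)<0$ until $a_i+g_i=U$, which is (i).

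To turn these moves into a proof I would process the stations in order $i=1,2,\dots,k-1$ and show that fixing the rule at $v_i$ never disturbs the rules already established at $v_1,\dots,v_{i-1}$, since every exchange alters only $g_i$, $g_{i+1}$ and the single arrival level $a_{i+1}$. The main obstacle I anticipate is the interplay of the two opposing moves together with the boundary cases in which a purchase $g_i$ would drop to $0$, or a capacity constraint becomes binding, before the rule is met; here I would lean on the minimality of the route from Lemma~\ref{KMM Pre-Lemma}. Because every intermediate purchase is positive, one always arrives at an intermediate station with strictly less fuel than is needed to continue, so ``fill just enough to arrive empty'' in (iii) always asks for a positive purchase and is feasible, while ``fill completely'' in (i) never forces us to carry fuel that the rest of the route cannot consume. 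Verifying that the exchanges therefore terminate at a single feasible refilling satisfying all of (i)--(iii) simultaneously---equivalently, that the greedy rule buys each unit of fuel at the cheapest station from which it can still be carried to where it is burned---is the crux; once it is in place the resulting refilling is optimal for the route, and hence globally.
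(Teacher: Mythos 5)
Your proposal is correct and takes essentially the same approach as the paper: fix the vertex-minimal optimal route from Lemma~\ref{KMM Pre-Lemma} and perform price-comparison exchanges of purchased gas between consecutive stations, with the positivity of every intermediate purchase ruling out the degenerate cases. The ``crux'' you flag at the end is in fact already within reach of your observations --- each exchange preserves the post-fill level at $v_{i+1}$ and hence everything downstream, so the rules fixed at earlier stations are undisturbed --- except that your claim that positive purchases force $a_i < d(v_i,v_{i+1})$ is not a direct implication; the feasibility $g_i \geq a_{i+1}$ of the case-(iii) shift instead follows because otherwise one could move all of $v_i$'s purchase to the no-more-expensive station $v_{i+1}$ without increasing the cost, producing an optimal refilling with $g_i=0$ and contradicting the minimality of the route (a point the paper's own proof also glosses over).
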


\begin{proof}
Pick an optimal route as given by Lemma~\ref{KMM Pre-Lemma}.

Suppose that at some point during travelling through this optimal route we reach vertex $v_i$, with $1 \leqslant i \leqslant k-2$ and suppose that $c(v_i) < c(v_{i+1})$. By Lemma~\ref{KMM Pre-Lemma} we have filled some gas at station $v_{i+1}$. If we did not fully filled our gas at station $v_i$ then we could have reduced our cost by filling more gas at $v_i$ and less at $v_{i+1}$, a contradiction. So at $v_i$ we definitely must completely fill our tank. 

If we reach $v_{k-1}$ then of course we fill the car with just enough gas in order to reach $v_k$ with our tank completely empty.

Finally suppose that at some point during travelling through this optimal route we reach vertex $v_i$, with $1 \leqslant i \leqslant k-1$ and $c(v_i) \geqslant c(v_{i+1})$. Suppose we fill the car at $v_i$ with $a$ units of gas and that when reaching $v_{i+1}$ we still have $b > 0$ units of gas left. Suppose that at $v_{i+1}$ we fill our tank with $c > 0$ units of gas. Of course we could still  reach $v_{i+1}$ with the same amount of gas but spend at most as much as before if we filled our car with $a-b$ units of gas at $v_i$ and $b+c$ units of gas at $v_{i+1}$.

So indeed the process described in the Lemma is an optimal refilling process. 
\end{proof}

An immediate and important Corollary of Lemma~\ref{KMM Lemma} is the following:

\begin{corollary}\label{Cor}
There is an optimal route such that for every vertex $u$ that we reach, either at the previous step we had an empty tank, or two steps before we completely filled our tank.
\end{corollary}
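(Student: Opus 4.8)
The plan is to take the optimal route $v_1,\dots,v_k$ together with the refilling strategy guaranteed by Lemma~\ref{KMM Lemma}, and to read off the tank state at each vertex directly from that strategy. The key observation is that the three cases (i)--(iii) partition all possibilities for how we refill at a given station into exactly two outcomes for the tank: either we leave the station with a completely full tank (this is case (i)), or we fill only enough to arrive at the next station with an empty tank (this is the common effect of cases (ii) and (iii)). Thus all the real content is already contained in Lemma~\ref{KMM Lemma}, and what remains is to translate it into a statement about arrival gas levels.

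First I would fix $u = v_j$ to be an arbitrary vertex on the route that we reach, and examine the station $v_{j-2}$ two steps earlier (when it exists). Applying the dichotomy above at $v_{j-2}$: if $c(v_{j-2}) \geq c(v_{j-1})$ then case (iii) applies there, so we reach $v_{j-1}$ with an empty tank, i.e.\ at the previous step the tank was empty; if instead $c(v_{j-2}) < c(v_{j-1})$ then, since $j-2 \leq k-2$, case (i) applies and we completely fill the tank at $v_{j-2}$, which is exactly the assertion that two steps before reaching $u$ we completely filled the tank. One should check that case (ii) never interferes here, as it only occurs at the second-to-last station $v_{k-1}$, and $j-2 = k-1$ forces $j = k+1$, which is out of range; so for $j \leq k$ the two cases above are genuinely exhaustive.

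Finally I would dispose of the boundary indices $j \in \{1,2\}$ separately: since we start at $v_1 = s$ with an empty tank, the condition that at the previous step the tank was empty holds trivially (or vacuously) for these first vertices. Collecting the cases over all $j$ then yields the claimed dichotomy for every reached vertex $u$, which is precisely the statement of the Corollary. I expect the only delicate point to be the index bookkeeping at the two ends of the route---correctly excluding case (ii) and handling the start vertex---rather than any substantive difficulty.
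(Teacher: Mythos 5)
Your proof is correct and is exactly the deduction the paper intends: the paper offers no written proof (calling the corollary ``immediate'' from Lemma~\ref{KMM Lemma}), and your argument---reading off that each station is left either completely full (case (i)) or with just enough gas to arrive empty at the next station (cases (ii) and (iii)), then applying this dichotomy at $v_{j-2}$---is the natural way to make that immediacy explicit. Your index bookkeeping (excluding case (ii) for $j\leqslant k$ and handling $j\in\{1,2\}$ via the empty starting tank) is also sound.
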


Given a vertex $u \in V$ and an integer $r \in \{0,1,\ldots,\Delta\}$ we write $C_0[u,r]$ for the minimal cost of starting from $s$ with an empty tank, and finishing at $u$ with an empty tank going through exactly $r$ edges. We also write $C_U[u,r]$ for the minimal cost of starting from $s$ with an empty tank, and finishing at $u$ with a full tank going through exactly $r$ edges.

If it is impossible to be on the vertex $u$ going through exactly $r$ edges then we define $C_0[u,r]=C_U[u,r] = \infty$.

The use of the symbol $\infty$ is purely for convenience of the proof. We could have just said that $C_0[u,r]$ and $C_U[u,r]$ are undefined. We will in fact make further use of this symbol. Note that given two vertices $u,v$ with $d(u,v) > U$ we can never travel between $u$ and $v$. Rather than separating the cases in which $d(u,v) > U$ or not it will be simpler to make all these distances equal to $\infty$. To be more explicit, we define a new function $d':E \to \mathbb{R}^+ \cup \{\infty\}$ by 
\[ d'(e) = \begin{cases} d(e) & \text{if } d(e) \leqslant U \\ \infty & \text{otherwise.}\end{cases}\]
In what follows, we might often travel through edges $e$ with $d'(e) = \infty$. The total cost of such a travel will be equal to $\infty$. At the end of the algorithm the minimal cost computed will be less than $\infty$, guaranteeing that the optimal route produced does not travel through such edges. (Unless of course it is impossible to make such a travel, whatever the cost.)

Note that the ordering of the edges we have already computed using the function $d$ is also an ordering of the edges using the function $d'$. On the other hand $d'$ does not satisfy the triangle inequalities but we need not worry about this as we have already used them for  Corollary~\ref{Cor}. Of course changing $d$ to $d'$ does not alter the conclusion of the Corollary even though $d'$ does not satisfy the triangle inequalities.

Our task is to calculate all $C_0[t,r]$ for $0 \leqslant r \leqslant \Delta$. The required quantity will then be the minimum of these. This minimum can be calculated in $\Delta$ steps. 

We will proceed inductively as follows:

Step $2r+1$: We calculate all $C_0[u,r]$ with $u \in U$ in $O(n^2)$ time.

Step $2r+2$: We calculate all $C_U[u,r]$ with $u \in U$ in $O(n^2)$ time.

We can stop at Step $2\Delta+1$ and so the total running time will be $O(\Delta n^2)$ as required.

The first two steps are easy to perform and can even be calculated in $O(n)$ time as we have
\[ C_0[u,0] = \begin{cases} 0 & u=s \\ \infty & u \neq s  \end{cases}\]
and
\[ C_U[u,0] = C_0[u,0] + Uc(u).\]

So now we assume that $r \geqslant 1$. For ease of exposition only, we will first explain how to perform Step $2r+2$ as is is easier to explain. To perform Step $2r+2$ we will assume that we already performed Step $2r+1$. Of course there is no problem with this as long as when we explain how to perform Step $2r+1$ we do not make any use of the Step $2r+2$.

For Step $2r+2$ fix a specific $u \in U$. As there are $n$ such $u$'s, it is enough to show how to compute $C_U[u,r]$ in $O(n)$ time.

Note that from Corollary~\ref{Cor}, if $u$ is the $r$-th vertex that we reach, either we reach it with an empty tank, or we leave the $(r-1)$-th vertex with a full tank.

If the first case happens then 
\[C_U[u,r] = C_0[u,r] + Uc(u).\] 
If the second case happens and $v$ is the $(r-1)$-th vertex, then 
\[C_U[u,r] = C_U[v,r-1] + d'(v,u)c(u).\]
So $C_U[u,r]$ is the minimum of $n$ quantities and can therefore be calculated in $O(n)$ time as required. [Note that in the second case we should have considered only those $v$ for which $d(v,u) \leqslant U$. Recall however that for all the other vertices we defined $d'(v,u) = \infty$ and so the total cost through such vertices is $\infty$. The optimal cost is finite and therefore the optimal route does not pass through such vertices unless of course there is no possible way to reach vertex $u$ through $r$ edges.]

We now explain how to perform Step $2r+1$. We will in fact consider many vertices at once, but for the moment fix a specific $u \in U$ and suppose that it is the $r$-th vertex that we reach and we reach it with an empty tank. From Corollary~\ref{Cor} there are two possible ways this could happen:
\begin{itemize}
\item[Type I:] We reach the $(r-1)$-st vertex with an empty tank. 
\item[Type II:] We leave the $(r-2)$-nd vertex with a full tank.
\end{itemize}

The main difficulty is to treat the `Type II' possibilities as a naive way to do it needs $\Theta(n^2)$ time for each $u \in V$ and thus $\Theta(n^3)$ time for all $u \in V$.

We write $C'_0[u,r]$ for the minimum cost over all `Type I' ways in which we can reach $u$ with an empty tank, and $C''_0[u,r]$ for the minimum over all `Type II' ways.

Note that if we reach $u$ with a `Type I' way through a vertex $v$ then 
\[ C_0[u,r] = C_0[v,r-1] + d'(u,v)c(v)\] 
So we can compute $C'_0[u,r]$ in $O(n)$ time. Thus we can compute $C'_0[u,r]$ for all $u \in V$ in $O(n^2)$ time.

It remains to compute $C''_0[u,r]$. If we can compute it for all $u \in V$ in $O(n^2)$ time then we will be done. Indeed, then for each $u \in V$ we have
\[ C_0[u,r] = \min\{C'_0[u,r],C''_0[u,r]\}\]
and we can compute this in $O(n)$ time for all $u \in V$. 

To compute $C''_0[u,r]$ we introduce yet another piece of notation and define $C''_0[u,r;v]$ as the minimum over all `Type II' ways such that the $(r-1)$-st vertex is $v$. Evidently,
\[ C''_0[u,r] = \min_{v \in V \setminus \{u\}}C''_0[u,r;v]\]

For each fixed $v$ we will compute $C''_0[u,r;v]$ for all $u \neq v$ in $O(n)$ time. This will be enough for our purposes. Indeed this means that we can compute $C''_0[u,r;v]$ for all $u,v \in V$ in $O(n^2)$ time. But then for each fixed $u$ we can compute $C''_0[u,r]$ in an additional $O(n)$ time and thus for all $u$ in an additional $O(n^2)$ time.

To do this observe that
\begin{multline*}
C''_0[u,r;v] = \\ \min_{w \in S(u,v)} \left[C_U[w,r-2] + (d'(w,v)+d'(v,u)-U)c(v) \right]
\end{multline*}
where $S(u,v)$ is the set of all vertices $w$ for which such `Type II' ways are possible. I.e.\ all $w \in V \setminus \{v\}$ such that $d'(w,v) + d'(v,u) \geqslant U$ or equivalently all $w \in V \setminus \{v\}$ such that $d(w,v) + d(v,u) \geqslant U$ 

The two main observations that will allow us to compute all $C''_0[u,r;v]$ fast enough are the following:

Firstly, we can rewrite
\begin{multline}\tag{$\ast$} 
C''_0[u,r;v] = d'(v,u)c(v) + \phantom{} \\ \min_{w \in S(u,v)} \left[C_U[w,r-2] - (U-d'(w,v))c(v)\right]  
\end{multline}
To see the importance of this observation, suppose for a moment that $S(u,v) = V \setminus \{v\}$ for each $u,v \in V$. (We will deal with the general situation in our second observation.) Then we could compute all
\[ \min_{w \in S(u,v)} \left[C_U[w,r-2] - (U-d'(w,v))c(v)\right] \]
in $O(n^2)$ time by taking $O(n)$ time for each $v$. But then we could compute all $C''_0[u,r;v]$ in $O(n^2)$ time as required.

In principle however, the sets $S(u,v)$ can be a lot different. However, for $u,u'$ with $d(u,v) \leqslant d(u',v)$ we have $S(u,v) \subseteq S(u',v)$. Indeed if $w \in S(u,v)$, then  
\[d(w,v) + d(v,u') \geqslant d(w,v) + d(v,u) \geqslant U  \]
so we also have $w \in S(u',v)$.

This second observation says that there is some structure to the sets $S(u,v)$. We will use it in order to still manage to compute all 
\[ \min_{w \in S(u,v)} \left[C_U[w,r-2] - (U-d'(w,v))c(v)\right] \]
in $O(n^2)$ time by taking $O(n)$ time for each $v$.

To prepare for its use we need the following lemma:
\begin{lemma}\label{local-lists}
For each fixed $v \in V$ and let $v_1,v_2,\ldots,v_{n-1}$ be the local edge ordering at $v$. Define $T(v_1,v) = S(v_1,v)$ and $T(v_i,v) = S(v_i,v) \setminus S(v_{i-1},v)$ for $2\leqslant i \leqslant n-1$. Then we can determine all $T(v_i,v)$ in $O(n)$ time. 
\end{lemma}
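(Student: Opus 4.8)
The plan is to exploit the monotonicity of the sets $S(\cdot,v)$ established just above, which, applied along the local edge ordering at $v$, yields the chain
\[ S(v_1,v) \subseteq S(v_2,v) \subseteq \cdots \subseteq S(v_{n-1},v). \]
Because the sets increase along this chain, the differences $T(v_i,v)$ partition the largest set $S(v_{n-1},v)$, and every vertex $v_j \in V \setminus \{v\}$ lies in at most one of them: writing $\sigma(j)$ for the smallest index $i$ with $v_j \in S(v_i,v)$, we have $v_j \in T(v_{\sigma(j)},v)$, while $v_j$ lies in no $T(v_i,v)$ if no such index exists. Thus it suffices to compute $\sigma(j)$ for every $j$ in total time $O(n)$ and then distribute the vertices into their buckets, which is a further $O(n)$.

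First I would record the explicit membership criterion. By definition $v_j \in S(v_i,v)$ precisely when $d(v,v_i) + d(v,v_j) \geq U$, so that
\[ \sigma(j) = \min\{\, i : d(v,v_i) \geq U - d(v,v_j) \,\}. \]
Two monotonicities now drive the argument. On the one hand $d(v,v_1) \leq d(v,v_2) \leq \cdots \leq d(v,v_{n-1})$, since $v_1,\ldots,v_{n-1}$ is the local edge ordering at $v$. On the other hand, as $j$ decreases the threshold $U - d(v,v_j)$ increases, so $\sigma(j)$ is non-decreasing as $j$ runs from $n-1$ down to $1$.

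These two facts let me compute all the values $\sigma(j)$ with a single linear merge. I would scan $j = n-1, n-2, \ldots, 1$ while maintaining one pointer $i$ that is never decremented: for each $j$ I advance $i$ through the sorted list $d(v,v_1),\ldots,d(v,v_{n-1})$ until $d(v,v_i) \geq U - d(v,v_j)$, then set $\sigma(j) = i$ and append $v_j$ to $T(v_i,v)$; and if the pointer ever runs past $n-1$, then $v_j$ belongs to no bucket and the same holds for all remaining (smaller) $j$. Since $\sigma$ is non-decreasing in this direction, the pointer $i$ only ever moves forward, so the advances total at most $n-1$ over the whole scan; together with the $n-1$ outer iterations this gives the claimed $O(n)$ bound.

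The only genuinely delicate point is guaranteeing that the pointer advances monotonically, which is exactly what the combination of the two orderings above secures; everything else (initialising the buckets to empty lists, resolving ties in the distances through the smallest-index convention, and recognising the vertices that never enter any $S(v_i,v)$) is routine. I expect this monotonicity bookkeeping to be the main thing to get right, since a naive recomputation of $\sigma(j)$ for each $j$ would cost $\Theta(n^2)$ and so destroy the overall running time the lemma is meant to support.
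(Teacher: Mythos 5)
Your proof is correct and is essentially the paper's argument: both exploit the nestedness $S(v_1,v)\subseteq\cdots\subseteq S(v_{n-1},v)$ together with the fact that each $S(v_i,v)$ is a suffix of the local edge ordering at $v$, and both run a two-pointer merge whose total pointer movement is amortised to $O(n)$. The only (immaterial) difference is that you loop over the vertices $v_j$ and advance the set index $i=\sigma(j)$, whereas the paper loops over the sets $S(v_i,v)$ and moves a backward pointer through the local ordering to find each cutoff $k_i$.
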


\begin{proof}
Starting from the last vertex, we go backwards through all the vertices of the local edge ordering at $v$ until we find one which does not belong to $S(v_1,v)$. Suppose $v_{k_1}$ is this vertex. From our second observation we have that 
\[ T(v_1,v) = S(v_1,v) = \{v_{k_1+1},\ldots,v_n\}.\]
In the case that $v_{k_1}=v_n$, the understanding is that $T(v_1,v) = \emptyset$. 

Now we start from $v_{k_1}$ and we again go backwards through all the vertices of the local edge ordering until we find one which does not belong to $S(v_2,v)$. Suppose $v_{k_2}$ is this vertex. So from our second observation we have that 
\[ S(v_2,v) = \{v_{k_2+1},\ldots,v_n\}\]
and
\[ T(v_2,v) = \{v_{k_2+1},\ldots,v_{k_1}\}.\]
Continuing inductively we can find $k_1,k_2,\ldots,k_{n-1}$ where $k_i$ is the first vertex in the local edge ordering, starting from $k_{i-1}$ and going backwards, which does not belong to $S(v_i,v)$. Then
\[ S(v_i,v) = \{v_{k_i+1},\ldots,v_n\}\]
and
\[ T(v_i,v) = \{v_{k_i+1},\ldots,v_{k_{i-1}}\}.\]
Note that at each step of the process we ask a question of the form `Does vertex $v_i$ belong to the set $S(v_j,v)$?'. This question is answered in $O(1)$ time by checking whether a specific inequality holds. If needed, we then add $v_i$ to the set $T(v_j,v)$ in $O(1)$ time. 

Furthermore, we have at most $n$ questions from which we obtain a `Yes' answer, as each time we obtain a `Yes' we move on to the next vertex in the local edge ordering. We also have at most $n$ questions from which we obtain a `No', as each time we obtain a `No', we move on to examining inclusion in the next set of the form $S(v_j,v)$. So all these checks and additions to lists can be done in $O(n)$ time as required. 
\end{proof}

Now we can proceed calculating $C''_0[u,r;v]$ for all $u \in V$ as follows:

Consider the local edge ordering $v_1,v_2,\ldots,v_{n-1}$ at $v$. We will show that for each $1 \leqslant m \leqslant n$ we can compute
\[ C''_0[v_1,r;v],\ldots,C''_0[v_m,r;v]\]
in $O(|S(v_m,v)|)$ time. 

Note that this completes the proof of Theorem~\ref{MainTheorem}. Indeed by taking $m=n-1$ we get that we can for fixed $v \in V$ compute all $C''_0[u,r;v]$ in $O(n)$ time for all $u\in V$ as required.

We will prove our claim by induction on $m$. This is immediate for $m=1$ since from $(\ast)$ we just need to compute the minimum over all elements of $T(v_1,v) = S(v_1,v)$. Suppose now that it is true for $m=k$. Then it is also true for $m = k+1$. Indeed again from our second observation we have $S(v_{k+1},v) = S(v_k,v) \cup T(v_{k+1},v)$.  Thus to compute the minimum of 
\[C_U[w,r-2] - (U-d'(w,v))c(v)\] 
over all $w \in S(v_{k+1},v)$ we can start with the minimum over all $w \in S(v_k,v)$ which is already computed and additionally consider the minimum over all elements of $T(v_{k+1},v)$. This can be done in another $O(|T(v_{k+1},v)|)$ steps and since the 
sets $S(v_k,v)$ and $T(v_{k+1},v)$ are disjoint, the total time taken so far is $O(|S(v_{k+1},v)|)$. So the claim is also true for $m=k+1$ and this completes the proof of the claim.

\section{Generalisations}

We finally conclude with a list of comments mostly concerned with possible generalisations of the algorithm.
\begin{enumerate}
\item The algorithm can also treat non-complete graphs as well. For every edge $uv$ that does not belong to $G$ we define $d(u,v)=U+1$. Thus we will never pass through that edge and so this is the same as when the edge does not exist. In that case though, the edge distances might not satisfy the triangle inequalities and so we would have to compute all shortest paths as in the previous generalisation. (If for each edge $uv$ we replace its edge length by the lengths of the shortest path between $u$ and $v$ then the new graph satisfies the triangle inequalities.) 
\item Given a graph with $n$ vertices and $m$ edges, by using Johnson's algorithm the shortest paths can be computed in $O(n^2\log{n}+mn)$ time. So in particular the total running time becomes $O(n^2\log{n} + \Delta n^2 + mn)$.
\item It would be interesting to solve to gas station problem for particular classes of graphs. For example~\cite{LGR} gives a linear time algorithm in the case that the graph is a path. 
\item The algorithm also treats directed graphs as well. Here, given two vertices $u$ and $v$ we allow $d(u,v) \neq d(v,u)$. This is relevant in various situations. For example going up the mountains needs more fuel consumption than returning back from the mountains via the same road. 
\item If $\Delta = n$ our algorithm solves the gas station problem in $O(n^3)$ time. The same time is also achieved in~\cite{KMM} and in~\cite{Lin}. (In fact in~\cite{Lin} this is even achieved for all pairs of vertices at once.)  
\item We have actually solved the gas station problem not only for the travel from the fixed starting vertex $s$ to the fixed terminal vertex $t$, but actually for the travel from the fixed starting vertex $s$ to every other vertex $u$ of $V$.
\item By applying the algorithm separately for each possible starting vertex (note that the local edge orderings are only computed once) we get a $O(\Delta n^3)$ algorithm for the all-pairs version of the gas station problem. This is better than the $O(\Delta^2 n^3)$ algorithm in~\cite{KMM} but not as good as the $O( n^3 \log{\Delta})$ algorithm by Lin in~\cite{Lin}.  
\item In practice, we do not only care about the minimum cost, but also on the minimum time needed for the travel. This was actually the logic behind the restriction of having at most $\Delta$ stops. This can still be treated by our algorithm if we have a `cost function' for the travel between each pair of vertices and the `total cost' is the sum of the individual costs.
\item The space complexity of the algorithm is $O(n^2)$. Indeed note that in performing Steps $2r+1$ and $2r+2$ we only need the knowledge of the following: The $n$ local edge orderings, with each local edge ordering having space complexity $O(n)$, the $\binom{n}{2}$ edge distances, $O(n)$ values of the form $C_0[u,r-1], C_U[u,r-1], C_0[u,r]$ and $C_U[u,r]$, where some of those values are known from before and some of those are calculated during the algorithm. Finally we also need $O(n^2)$ values of the form $C_0''[u,r;v]$. Note in particular than once we are done with Steps $2r+1$ and $2r+2$ we do not use the $C_0''[u,r;v]$ anymore.
\end{enumerate}

%
%

\end{multicols}

\end{document}